\documentclass{amsart}
\usepackage{amsmath}
\usepackage[ocgcolorlinks,linktoc=all]{hyperref}
\hypersetup{citecolor=blue,linkcolor=red}
\newtheorem{theorem}{Theorem}

\newtheorem*{thmmain}{Theorem}
\newtheorem{lemma}[theorem]{Lemma}
\newtheorem{proposition}[theorem]{Proposition}

\theoremstyle{definition}
\newtheorem{definition}[theorem]{Definition}

\theoremstyle{remark}

\numberwithin{equation}{section}

\DeclareMathOperator{\RR}{\mathbb{R}}
\DeclareMathOperator{\sphere}{\mathbb{S}}
\DeclareMathOperator{\basepoint}{p_0}

\DeclareMathOperator{\radialdistance}{\rho}

\DeclareMathOperator{\reflectionvector}{V}
\newcommand{\reflectionplane}[1][\reflectionvector]{\ensuremath{P_{#1}}}
\newcommand{\reflectionmap}[1][\reflectionvector]{\ensuremath{R_{#1}}}
\newcommand{\reflectionset}[2][\reflectionvector]{\ensuremath{{#2}_{#1}}}
\newcommand{\reflectionhalfspace}[1][\reflectionvector]{\ensuremath{\reflectionset[{#1}]{H}}}
\newcommand{\vertvec}{e}
\DeclareMathOperator{\origin}{O}
\DeclareMathOperator{\radialprojection}{\pi}
\DeclareMathOperator{\height}{h}

\newcommand{\ip}[2]{\ensuremath{\langle{#1},{#2}\rangle}}
\DeclareMathOperator{\intersect}{\cap}

\begin{document}

\title[Harnack estimate for mean curvature flow on the sphere]
{Harnack estimate for mean curvature flow on the sphere}
\author[P. Bryan]{Paul Bryan}
\address{Department of Mathematics, Macquarie University
NSW 2109, Australia}
\email{paul.bryan@mq.edu.au}
\author[M.N. Ivaki]{Mohammad N. Ivaki}
\address{Department of Mathematics, University of Toronto, Ontario,
M5S 2E4, Canada}
\email{m.ivaki@utoronto.ca}
\keywords{Mean curvature flow, Ancient solutions, Aleksandrov reflection, Harnack estimate}
\subjclass[2010]{53C44, 35K55, 58J35}

\begin{abstract}
We consider the evolution of hypersurfaces on the unit sphere $\mathbb{S}^{n+1}$ by their mean curvature. We prove a differential Harnack inequality for any weakly convex solution to the mean curvature flow. As an application, by applying an Aleksandrov reflection argument, we classify convex, ancient solutions of the mean curvature flow on the sphere.
\end{abstract}
\maketitle
We consider convex hypersurfaces $M_0$, $n\ge2$, without boundary, which are smoothly embedded in $\mathbb{S}^{n+1}.$ Let $F_0:M^n\to \mathbb{S}^{n+1}$ be a smooth embedding of $M_0$. The mean curvature flow with initial data $F_0$ is a family of hypersurfaces given by embeddings $F:M^n\times[0,T)\to \mathbb{S}^{n+1}$, $~ F(\cdot,0)=F_0(\cdot)$ that moves in the direction of the unit normal vector $\nu$ with speed equal to the mean curvature $H$, the trace of the second fundamental form $A(V,V)$ over the tangent vectors $V$, e.q.,
\[\partial_tF(\cdot,t)=-H(\cdot,t)\nu(\cdot,t).\]
Here, $H(p,t)$ is the mean curvature of the hypersurface $M_t:=F(M^n,t)$ at the point $F(p,t)$ where the unit outer normal vector is $\nu(p,t).$
Let $D$ denote the Levi-Civita connection of $\mathbb{S}^{n+1}.$
\begin{thmmain}[Harnack estimate]
Suppose $M_t$ is a smooth, weakly convex solution of the mean curvature flow on the time interval $[0,T)$. For $t>0$ we have
\[\partial_tH+\frac{H}{2t}+2D_VH+A(V,V)-nH\geq 0\]
for all tangent vectors $V$.
\end{thmmain}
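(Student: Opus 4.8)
The plan is to argue by the maximum principle in the style of Hamilton's differential Harnack estimates, applied to the quadratic form in $V$ appearing on the left-hand side. Writing $\nabla$ and $\Delta$ for the induced connection and Laplacian on $M_t$, I would first assemble the standard evolution equations for the mean curvature flow on the sphere. The essential inputs are $\partial_t H = \Delta H + (\abs{A}^2 + n)H$, together with the evolution of $g_{ij}$, of $h_{ij}$, and of $\nabla_i H$. On $\sphere^{n+1}$ these are obtained from their Euclidean counterparts by inserting the ambient curvature through the Gauss, Codazzi, and Simons identities; in particular Simons' identity acquires an extra term of the form $n h_{ij} - H g_{ij}$ (up to sign) coming from the unit sectional curvature of $\sphere^{n+1}$. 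Tracking these ambient contributions carefully is what produces the correction $-nH$ relative to the flat Hamilton estimate.

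Next I would introduce the Harnack expression as a time-dependent quadratic form
\[ Z(V) = \partial_tH + \frac{H}{2t} + 2D_VH + A(V,V) - nH, \]
and treat it as a function on the tangent bundle of $M_t$. Since $Z$ is quadratic in $V$ with leading term $A(V,V)$, at an interior spatial minimum of $Z$ the vector $V$ satisfies the first-order condition $\nabla_i H + h_{ij}V^j = 0$, which I would use to eliminate the first-derivative terms. The core step is then to compute $\partial_t Z$ and to show that, along the flow,
\[ \partial_t Z \ge \Delta Z + (\text{transport terms}) + \mathcal{R}, \]
where the reaction $\mathcal{R}$ is a sum of manifestly nonnegative quantities once $A \ge 0$. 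Here the algebraic design of $Z$ is crucial: the coefficient $2$ in $2D_VH$ and the presence of $A(V,V)$ are exactly what is needed for the dangerous cross terms to cancel and for the remainder to organize into a nonnegative expression, while the choice $H/(2t)$ is calibrated to the self-similar (shrinking) solutions, so that the negative term $-H/(2t^2)$ produced by its time derivative is absorbed by the reaction.

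With the differential inequality in hand, I would run the maximum principle. As $t \to 0^+$ the term $H/(2t)$ forces $Z(V) \to +\infty$ wherever $H > 0$, so $Z > 0$ for small $t$; the evolution inequality for $Z$ then propagates $Z \ge 0$ to all $t \in (0,T)$. To handle the weakly convex (rather than strictly convex) hypothesis cleanly, where $A$ may be degenerate and the optimizing $V$ need not exist, I would either approximate by strictly convex solutions and pass to the limit, or invoke Hamilton's strong maximum principle for systems to upgrade weak to strict convexity for $t > 0$, reducing to the nondegenerate case.

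I expect the main obstacle to be the evolution computation for $Z$: verifying that, after substituting the Simons identity with its spherical curvature terms and using the first-order condition at the minimum, every term of indefinite sign either cancels or is absorbed into a nonnegative reaction. The sphere's ambient curvature introduces several additional terms absent in the Euclidean case, and the delicate point is confirming that they combine with the $-nH$ and $H/(2t)$ terms in $Z$ so that the net reaction remains nonnegative under only weak convexity.
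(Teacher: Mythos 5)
Your strategy is in outline the same as the paper's: upgrade weak to strict convexity for $t>0$ by the strong maximum principle, optimize the quadratic form in $V$ (your first-order condition $\nabla_iH+h_{ij}V^j=0$ gives exactly $V^i=-b^{ij}\nabla_jH$, with $\{b^{ij}\}$ the inverse of the second fundamental form, so that $\inf_V\,(2D_VH+A(V,V))=-b^{ij}\nabla_iH\nabla_jH$), and run a maximum principle with $H/(2t)$ calibrated to the shrinking spheres. The genuine gap is that the proposal stops exactly where the proof begins: the evolution inequality whose verification you defer as ``the main obstacle'' \emph{is} the entire content of the theorem, and you supply no mechanism for why the indefinite terms cancel. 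In the paper this is done by setting $\Theta=\partial_tH-b^{ij}\nabla_iH\nabla_jH$ and $Q=(\Theta-nH)/H$ and proving, across Lemmas \ref{lem: lem3}--\ref{lem: lema6}, that
\[
\partial_tQ\ \geq\ \Delta Q+2\Bigl\langle \nabla Q,\tfrac{\nabla H}{H}\Bigr\rangle+2Q^2,
\]
where all the dangerous Hessian--gradient cross terms are packaged into the single expression $2\{g^{mq}b^{np}-\tfrac{1}{H}g^{mn}g^{pq}\}\eta_{mn}\eta_{pq}$ with $\eta_{mn}=\nabla_m\nabla_nH+H(h^2)_{mn}-b^{rs}\nabla_rH\nabla_sh_{mn}$. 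This quantity is nonnegative by Cauchy--Schwarz (the ``inverse-concavity of the mean curvature''), and the ambient curvature contributes the further good term $2\lambda H^2$ after the $-nH$ shift is built into $Q$. Without identifying this grouping, or an equivalent one, ``every term of indefinite sign either cancels or is absorbed'' is an assertion rather than an argument, and it is precisely the step where a wrong choice of test quantity fails.

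Two smaller issues. First, the term $-H/(2t^2)$ is not absorbed by a generically nonnegative reaction but precisely by the quadratic term $2Q^2$: the claim $Q\geq-\tfrac{1}{2t}$ is proved by ODE comparison with $q'=2q^2$, so the coefficient $\tfrac{1}{2}$ in $H/(2t)$ is forced by the reaction, not merely ``calibrated to shrinkers.'' Second, your initial condition ``$Z\to+\infty$ as $t\to0^+$'' is shaky: at $t=0$ the hypersurface may be only weakly convex, so $H$ may vanish and $b^{ij}$ is undefined there. The paper instead works on $[\varepsilon,T)$, where strict convexity already holds, compares with $q(t)=-\tfrac{1}{2(t-\varepsilon)}\to-\infty$ as $t\to\varepsilon^+$ (so no positivity at the initial time is required), and then lets $\varepsilon\to0$; your proposed approximation by strictly convex solutions would still need this time-shift to produce the $H/(2t)$ term in the stated form.
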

Comparing this with Hamilton's Harnack inequality \cite{Hamilton 95} for the mean curvature flow in $\mathbb{R}^{n+1}$, here we gained a bonus term $-nH$ through the ambient space $\mathbb{S}^{n+1}.$ This allows us,
by employing a parabolic Aleksandrov reflection argument first developed in \cite{ Chow 97,Chow-Gul 01, Chow-Gul 96},  a rather convenient classification of ancient, smooth, weakly convex solutions of the mean curvature flow.
\begin{thmmain}[Classification of ancient solutions]
The only ancient, weakly convex, embedded solutions of the mean curvature flow on the sphere are equators and shrinking geodesic spheres.
\end{thmmain}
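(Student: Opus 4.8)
The plan is to combine the just-proved Harnack estimate with a parabolic Aleksandrov reflection argument, the point being that the Harnack inequality manufactures a \emph{symmetric} backward limit which then seeds the reflection monotonicity.

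First I would pass to the ancient form of the Harnack estimate. Given a weakly convex ancient solution on $(-\infty,T)$, fix $t_0<T$ and apply the estimate to the solution regarded as starting at time $t_0$; the correction term becomes $H/\bigl(2(t-t_0)\bigr)$, and letting $t_0\to-\infty$ kills it, leaving $\partial_tH+2D_VH+A(V,V)-nH\ge 0$ for all $V$. Taking $V=0$ gives the pointwise inequality $\partial_tH\ge nH$, so $e^{-nt}H$ is nondecreasing in $t$. Running this backward, $H(\cdot,t_1)\le e^{-n(t_2-t_1)}H(\cdot,t_2)\to 0$ as $t_1\to-\infty$, and the analogous bound for $H_{\max}$ (which obeys $\partial_tH_{\max}\ge nH_{\max}$ in the barrier sense) shows $H\to 0$ uniformly. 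Since the solution is weakly convex, every principal curvature lies in $[0,H]$, so $A\to 0$ and $M_t$ converges, as $t\to-\infty$, to a closed totally geodesic hypersurface, i.e.\ an equator $E$. This is exactly where the ambient bonus term $-nH$ does the work: in $\mathbb{R}^{n+1}$ Hamilton's inequality gives only $\partial_tH\ge 0$, with no forced symmetric limit.

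Next I would run the reflection argument, using $E$ and its poles $N,-N$ as reference data. For each totally geodesic hypersphere $P$ through $N$ the reflection $R_P$ preserves $E$. The parabolic Aleksandrov reflection principle of Chow and Gulliver supplies a reflection defect across $P$ that is monotone nonincreasing in $t$ along the flow, provided the reflected caps of the convex hypersurface stay ordered, which convexity guarantees. Because $M_t\to E$ and $R_PE=E$, this defect tends to $0$ as $t\to-\infty$; being nonnegative and nonincreasing it must vanish identically, so $M_t$ is invariant under $R_P$ for \emph{every} such $P$. The reflections $R_P$ with $P\ni N$ generate the full subgroup of isometries of $\mathbb{S}^{n+1}$ fixing $N$, so each $M_t$ is a union of geodesic spheres centered at $N$; being a connected closed hypersurface, $M_t$ is a single geodesic sphere centered at $N$ (the equator being the radius $\pi/2$ case). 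Evolving forward then forces the radius to satisfy the shrinking-sphere ODE, whose only ancient, convex solutions are $r\equiv\pi/2$ and the radii decreasing from $\pi/2$ to $0$, recovering precisely the equators and shrinking geodesic spheres.

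The main obstacle is the reflection step. One must set up the reflection defect on $\mathbb{S}^{n+1}$ rigorously, check that weak convexity renders the reflected caps comparable so the maximum principle applies (handling the degenerate contact at $N$ and along the free boundary of each cap), and justify both the monotonicity and the limit $t\to-\infty$ uniformly over the $n$-parameter family of hyperplanes through $N$. A related delicate point, needed so that the center $N$ is well defined, is that the backward limit is a \emph{single} equator rather than a family of subsequential limits; this I would pin down using the convexity and the monotonicity already in hand. By contrast the Harnack limit of the first paragraph and the final ODE classification are comparatively routine.
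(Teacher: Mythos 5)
Your first paragraph (backward convergence) is essentially the paper's route: the Harnack inequality with $V=0$ gives $\partial_t\log H\ge n$, hence $H\le H(\cdot,0)e^{nt}$ and exponential decay of $|A|$; the paper then proves inductively that $|\nabla^m A|^2\le c_m e^{2nt}$ to get smooth convergence to a \emph{unique} equator $E$ — the point you flag as delicate is handled there by these Bernstein-type derivative bounds, not by convexity alone, so you would need to supply that step. The final ODE step is also fine. The genuine gap is in the reflection step.

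You reflect across totally geodesic hyperplanes $P$ \emph{through the pole} $N$, and you assert (i) that convexity guarantees the reflected caps stay ordered, and (ii) that the reflection defect is nonincreasing, nonnegative, and tends to $0$ as $t\to-\infty$, hence vanishes. Claim (i) is false: convexity of $M_t$ does not order $R_P(M_t^+)$ against $M_t^-$ for an arbitrary $P\ni N$; indeed that ordering (in both directions) is precisely the symmetry you are trying to prove, and for planes through the pole the limit $E$ is \emph{exactly} $R_P$-invariant, so no strict ordering is available at any backward time. Claim (ii) fails for a quantitative reason: writing $M_t$ as a graph $f_t$ over $E$ and setting $v=f_t\circ R_P-f_t$ on a half of $E$ with $v=0$ on $E\cap P$, the function $v$ satisfies a linear parabolic equation whose zeroth-order coefficient near $f\equiv\pi/2$ is the Jacobi operator term $+n$ (the equator is an unstable minimal hypersurface). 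So only $e^{-nt}\sup v^+$ is monotone, while $\sup v^+$ itself decays at rate at best $e^{nt}$ as $t\to-\infty$; the two rates cancel exactly and the argument ``defect $\to 0$ at $-\infty$ and nonincreasing, hence $\equiv 0$'' does not close. The paper circumvents this by reflecting across \emph{tilted} hyperplanes $P_{\vec V}$ with $\langle\vec V,\vertvec\rangle=-\sin\delta<0$: such a reflection pushes $E^+$ a definite height $2\sin\delta\,\langle x,\vec V\rangle>0$ into the hemisphere, so $R_{\vec V}(M_t^+)\ge M_t^-$ holds outright for all sufficiently negative $t$ (with the Backwards Approximate Symmetry Lemma of Bryan--Louie controlling the strip near $E\cap P_{\vec V}$, uniformly in the direction); this one-sided ordering is then propagated forward by the standard Aleksandrov maximum principle (which tolerates the zeroth-order term), and letting $\delta\to 0$ over all directions, Proposition 5.3 of Bryan--Louie — not a symmetry group generation argument — yields that each $M_t$ is a geodesic sphere. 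To repair your proof you would need either to replace the pole-containing planes by this tilted family, or to supply a genuinely different mechanism for obtaining the initial ordering.
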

Let us note that such a classification has been obtained already in \cite[Theorem 6.1]{Hu-Sin 2014}, with a relatively short proof applying the maximum principle to the quantity $(\|A\|^2 - \tfrac{1}{n}H^2)/H^2$. As usual for such arguments, the Codazzi equation is employed; therefore, the result pertains to $n\geq 2$. The first author and Louie \cite{Br-Lou} used a completely different argument to obtain the classification for $n=1$. First, a Harnack inequality is obtained, which ensures that the backwards limit as $t \to -\infty$ is an equator. Then the Aleksandrov reflection argument is employed to show maximal reflection symmetry, immediately resulting in the classification. Our work here extends the techniques of \cite{Br-Lou} from $n=1$ to $n\geq 2$ formally following the same procedure of obtaining the Harnack inequality and then using the Aleksandrov reflection to show the maximal reflection symmetry. We believe, that unlike the methods in \cite{Hu-Sin 2014}, our argument is more broadly applicable: once the Harnack inequality is obtained for a curvature flow, the remainder of the argument holds with very little modification. This theme will be explored for a range of curvature flows similar to those studied in \cite{Andrews 94} in a forthcoming paper. In \cite{Andrews 94}, to obtain Harnack inequalities, the Gauss map parametrization and support function are used to great effect, considerably simplifying the computations. A genuine difficulty on the sphere is obtaining a similar useful parametrization under which we can perform the calculations.

Before moving on to the details, a few words about the broader context are in order. It is well known that ancient solutions arise as singularity models for curvature flows (e.g., \cite{Hu-Sin 1999}) and their study is of great importance. On the unit sphere, as opposed to the Euclidean case studied in \cite{Hu-Sin 2014}, where classification can be obtained only with additional assumptions such as pinching, we find a complete classification and strong rigidity: the only weakly convex, ancient solutions are the shrinking geodesic spheres. The difference is the compactness and positive curvature of the sphere.

Classifications of ancient solutions have been obtained for the curve shortening flow in the plane \cite{Da-Ham-Sesu 2010} and the Ricci flow of surfaces \cite{Da-Ham-Sesu 2012}. Both examples include self-similar shrinkers (round circles for the former and constant curvature spheres for the latter) and a unique Type II ancient solution with curvature blowing up faster than $|t|$ as $t\to-\infty$ (the Angenent oval [paper clip] in the former case and the Rosenau in the latter). In higher dimensions, classification results for ancient solutions in Euclidean space are less complete, and a wider range of possibilities may occur. As mentioned above, the results of \cite{Hu-Sin 2014} require additional pinching assumptions to deduce that such ancient solutions are shrinking spheres. Wang \cite{Wang} studied translating ancient solutions and discovered the existence of non-rotationally symmetric translators (though they blow down to spheres or cylinders). Furthermore, there are solutions asymptotic to ``ovals" in that the center looks like a cylinder, but the ends look like ``bowls" \cite{Ang,Has-Her,Whi}. These solutions have similarities to the Angenent oval in the plane.
\section*{Acknowledgement}
The first author would like to thank Bennett Chow for helpful discussions and encouragement. He would also like to thank the second author and the Institut f\"ur Diskrete Mathematik und Geometrie, TU Wien for hosting a very enjoyable and productive visit to Vienna. The work of the second author was supported by the Austrian Science Fund (FWF) Project M1716-N25.
\section{Preliminaries}
Let $g=\{g_{ij}\}$, $A=\{h_{ij}\},$ and $Rm_{ijkl}$ denote, in order, the induced metric, the second fundamental form, and the curvature tensor of $M^n$. The mean curvature of $M^n$ is the trace of the second fundamental form with respect to $g$, $H=g^{ij}h_{ij}.$ Let $\overline{g}$ and $\overline{Rm}_{\alpha\beta\gamma\theta}$ denote, respectively, the metric and the curvature tensor of $\mathbb{S}^{n+1}$. Greek indices run through $\{0,\ldots,n\}$ and Latin indices belong to the set $\{1,\ldots,n\}.$

Write $\nu$ for the outer unit normal to $M_t.$ For a fixed time, we choose a local orthonormal frame $\{\partial_0=\nu,\ldots,\partial_i=\frac{\partial F(\cdot,t)}{\partial x_i},\ldots,\partial_n\}$ in $\mathbb{S}^{n+1}.$ We use the following standard notation
\[h_i^j=g^{mj}h_{im}\]
\[(h^2)_i^j=g^{mj}g^{rs}h_{ir}h_{sm}\]
\[|A|^2=g^{ij}g^{kl}h_{ik}h_{lj}=h_{ij}h^{ij}\]
\[C=g^{ij}g^{kl}g^{mn}h_{ik}h_{lm}h_{nj}=h_i^kh_k^lh_l^i.\]
Here, $\{g^{ij}\}$ is the inverse matrix of $\{g_{ij}\}.$

The relations between $A$, $Rm$, and $\overline{Rm}$ are given by the Gau{\ss} and Codazzi equations:
\[Rm_{ijkl}=\overline{Rm}_{ijkl}+h_{ik}h_{jl}-h_{il}h_{jk}\]
\[\nabla_ih_{jk}=\nabla_{k}h_{ij}.\]
Moreover, $\nabla_i$ and $\Delta$ commute as follows
\[(\nabla_i\Delta-\Delta\nabla_i)f=-Rc_i^j\nabla_jf\]
for all smooth functions on $M^n.$
Therefore, in view of $\bar{R}_{\alpha\beta\gamma\theta}=\lambda (\overline{g}_{\alpha\gamma}\overline{g}_{\beta\theta}-\overline{g}_{\alpha\theta}\overline{g}_{\beta\gamma})$, $\lambda =1$, and the Gau{\ss} equation we have
\begin{equation}\label{eq: commute}
(\nabla_i\Delta-\Delta\nabla_i)H=((h^2)_i^m-Hh_i^m-(n-1)\delta_i^m\lambda )\nabla_mH.
\end{equation}
\section{Evolution equations}
In this section, we assume that $M_t$ is a strictly convex solution of the mean curvature flow.
\begin{lemma}\label{lem: lem3}
The following evolution equations hold under the mean curvature flow:
\begin{enumerate}
  \item $\partial_tg_{ij}=-2Hh_{ij}$
  \item $\partial_tg^{ij}=2Hg^{im}g^{jn}h_{ij}=2Hh^{ij}$
  \item $\partial_t h_i^j=\nabla^j\nabla_iH+H(h^2)_i^j+\lambda H\delta_i^j$
  \item $\partial_t h_i^j=\Delta h_i^j+|A|^2h_i^j+\lambda \{2H\delta_i^j-nh_i^j\}$
  \item $\partial_t h_{ij}=\Delta h_{ij}+|A|^2h_{ij}-2H(h^2)_{ij}+\lambda \{2Hg_{ij}-nh_{ij}\}$
  \item $\partial_t H=\Delta H+H|A|^2+n\lambda H$
  \item $(\partial_t\Delta-\Delta\partial_t)H=2Hh^{ij}\nabla_i\nabla_jH+2h^{ij}\nabla_iH\nabla_jH.$
\end{enumerate}
\end{lemma}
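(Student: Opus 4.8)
The plan is to treat these seven identities as a single descending chain: the first two follow immediately from the flow equation, the evolution of the Weingarten operator in (3) is the geometric heart of the matter, and the remaining four are then extracted from it by a Simons-type commutation, by lowering an index, by tracing, and by a metric-variation computation. First I would prove (1) by differentiating $g_{ij}=\overline{g}(\partial_iF,\partial_jF)$. Since $F$ is smooth, $\partial_t$ and $\partial_i$ commute, so $D_{\partial_t}\partial_iF=D_{\partial_i}(\partial_tF)=-(\nabla_iH)\nu-HD_{\partial_i}\nu$; pairing with $\partial_jF$ annihilates the normal part, and the Weingarten relation $\overline{g}(D_{\partial_i}\nu,\partial_jF)=h_{ij}$ yields $\partial_tg_{ij}=-2Hh_{ij}$. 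Differentiating $g^{ik}g_{kj}=\delta^i_j$ and substituting then gives (2) algebraically.

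For (3) I would first record that $\partial_t\nu$ is tangential, since $\nu$ remains a unit normal, and equals $\nabla H$; this comes from $0=\partial_t\overline{g}(\nu,\partial_jF)$ together with the computation above. The main work is differentiating $h_{ij}=-\overline{g}(\nu,D_{\partial_i}\partial_jF)$ in time. The delicate term is $D_{\partial_t}D_{\partial_i}\partial_jF$, which must be commuted back into spatial derivatives; this is exactly where the ambient geometry enters, through $[D_{\partial_t},D_{\partial_i}]\partial_jF=-H\,\overline{R}(\nu,\partial_iF)\partial_jF$. Inserting $\overline{Rm}_{\alpha\beta\gamma\theta}=\lambda(\overline{g}_{\alpha\gamma}\overline{g}_{\beta\theta}-\overline{g}_{\alpha\theta}\overline{g}_{\beta\gamma})$ collapses this curvature term into a multiple of the metric, producing the summand $\lambda H\delta_i^j$, and after collecting the tangential derivatives of $H$ and the contribution of $\partial_t\nu$ one arrives at (3). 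I expect this commutation, together with the bookkeeping of normal versus tangential components it requires, to be the principal obstacle.

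Everything else descends from (3). For (4) I would commute the covariant derivatives in $\nabla^j\nabla_iH$ using the Codazzi identity $\nabla_ih_{jk}=\nabla_kh_{ij}$ and the Gau{\ss} equation, that is, the space-form Simons identity $\Delta h_i^j=\nabla^j\nabla_iH+H(h^2)_i^j-|A|^2h_i^j+\lambda(nh_i^j-H\delta_i^j)$; substituting this into (3) cancels the $H(h^2)_i^j$ term and combines the curvature contributions into $\lambda\{2H\delta_i^j-nh_i^j\}$. Lowering an index via $h_{ij}=g_{jk}h_i^k$ then gives (5): here the time derivative of the metric from (1) contributes precisely the extra $-2H(h^2)_{ij}$, while the remaining terms pass through since $\nabla$ annihilates $g$. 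Tracing (4) over its $(1,1)$ index gives (6) at once, because $H=h_i^i$ and the trace pairing is time-independent, so $\partial_tH=\operatorname{tr}(\partial_th_i^j)=\Delta H+H|A|^2+\lambda(2nH-nH)=\Delta H+H|A|^2+n\lambda H$.

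Finally (7) is a direct metric-variation computation with no curvature subtlety. Since $\Delta=g^{ij}\nabla_i\nabla_j$ depends on the evolving metric, $(\partial_t\Delta-\Delta\partial_t)H=(\partial_tg^{ij})\nabla_i\nabla_jH-g^{ij}(\partial_t\Gamma^k_{ij})\nabla_kH$. The first term is $2Hh^{ij}\nabla_i\nabla_jH$ by (2). For the second I would insert $\partial_t\Gamma^k_{ij}=\tfrac12g^{kl}(\nabla_i\partial_tg_{jl}+\nabla_j\partial_tg_{il}-\nabla_l\partial_tg_{ij})$, substitute (1), and use the contracted Codazzi identity $\nabla^jh_{jl}=\nabla_lH$ to cancel the $H\nabla H$ contributions, leaving exactly $2h^{ij}\nabla_iH\nabla_jH$. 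Summing the two pieces yields (7).
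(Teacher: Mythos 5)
Your proposal is correct and follows essentially the same route as the paper: the paper defers items (1)--(6) to Huisken's computations (which are exactly the standard first-variation and Simons-identity arguments you carry out), and its proof of (7) uses the very same decomposition $(\partial_t\Delta-\Delta\partial_t)H=(\partial_tg^{ij})\nabla_i\nabla_jH-g^{ij}(\partial_t\Gamma_{ij}^k)\nabla_kH$ together with (1), (2) and the contracted Codazzi identity. The only difference is that you write out the details the paper cites away.
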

\begin{proof} The computations for $(1)-(6)$ are straightforward; see \cite{Huisken 87}. To obtain $(7)$, we note that
\begin{align*}
(\partial_t\Delta-\Delta\partial_t)H=\left(\partial_i\partial_jH-\Gamma_{ij}^k\partial_kH\right)\partial_tg^{ij}-g^{ij}\partial_kH\partial_t\Gamma_{ij}^k,
\end{align*}
where $\Gamma_{ij}^k=g^{kl}(\partial_ig_{jl}+\partial_jg_{il}-\partial_lg_{ij}).$ Since $\partial_t\Gamma_{ij}^k$ is tensorial, using $(1)$ and $ (2)$ we may carry out our calculations in a normal frame to prove the claim.
\end{proof}
\begin{lemma}\label{lem: lem1}
Under the mean curvature flow we have
\begin{align*}
\partial_t(\partial_tH)=&
\Delta \partial_tH+4Hh^{ij}\nabla_i\nabla_jH+2h^{ij}\nabla_iH\nabla_jH\\
&+(|A|^2+n\lambda )(\partial_t H)+2H^2C+2\lambda H^3,
\end{align*}
and
\begin{align*}
\nabla_i\partial_tH
=\Delta\nabla_iH+\nabla_i(|A|^2H)+((h^2)_i^m-Hh_i^m)\nabla_mH+\lambda \nabla_iH.
\end{align*}
\end{lemma}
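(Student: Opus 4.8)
The plan is to obtain both identities directly from the scalar evolution equation for the mean curvature, part $(6)$ of Lemma~\ref{lem: lem3},
\[\partial_tH=\Delta H+H|A|^2+n\lambda H,\]
by differentiating it once with $\nabla_i$ and once with $\partial_t$, and then commuting the spatial and temporal derivatives past $\Delta$ using \eqref{eq: commute} and part $(7)$ of Lemma~\ref{lem: lem3}.

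I would treat the gradient identity first, as it requires only a single commutation. Applying $\nabla_i$ to both sides of $(6)$ gives $\nabla_i\partial_tH=\nabla_i\Delta H+\nabla_i(|A|^2H)+n\lambda\nabla_iH$. Replacing $\nabla_i\Delta H$ by $\Delta\nabla_iH$ via the commutator \eqref{eq: commute} introduces the terms $((h^2)_i^m-Hh_i^m-(n-1)\lambda\delta_i^m)\nabla_mH$. The one point to watch is the bookkeeping of the zeroth-order curvature terms: the $-(n-1)\lambda\nabla_iH$ contributed by \eqref{eq: commute} combines with the $+n\lambda\nabla_iH$ already present to leave exactly $+\lambda\nabla_iH$, which yields the stated formula.

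For the second time-derivative identity I would differentiate $(6)$ in time,
\[\partial_t(\partial_tH)=\partial_t(\Delta H)+\partial_t(H|A|^2)+n\lambda\,\partial_tH.\]
The first term is handled by part $(7)$, writing $\partial_t\Delta H=\Delta\partial_tH+2Hh^{ij}\nabla_i\nabla_jH+2h^{ij}\nabla_iH\nabla_jH$. Expanding the reaction term as $\partial_t(H|A|^2)=(\partial_tH)|A|^2+H\,\partial_t|A|^2$ and combining $(\partial_tH)|A|^2$ with $n\lambda\,\partial_tH$ already produces the coefficient $(|A|^2+n\lambda)\,\partial_tH$. The key computation is $\partial_t|A|^2$: writing $|A|^2=h_i^jh_j^i$ so that $\partial_t|A|^2=2h_j^i\,\partial_th_i^j$, I would substitute part $(3)$ of Lemma~\ref{lem: lem3} rather than $(4)$, since $(3)$ delivers $\partial_t|A|^2=2h^{ij}\nabla_i\nabla_jH+2HC+2\lambda H^2$, with the cubic trace $C=h_i^kh_k^lh_l^i$ appearing directly and no Simons-type reduction required.

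The step I expect to be the main obstacle is precisely this evaluation of $\partial_t|A|^2$ and the subsequent tally. Using $(3)$ instead of $(4)$ is essential: the latter would leave $2h_j^i\Delta h_i^j$ together with $2|A|^4$ and curvature terms, forcing an appeal to the Simons identity to recover the Hessian $2h^{ij}\nabla_i\nabla_jH$ and the cubic $2HC$. Once $\partial_t|A|^2$ is in hand, the term $H\cdot 2h^{ij}\nabla_i\nabla_jH$ adds to the $2Hh^{ij}\nabla_i\nabla_jH$ from the commutator $(7)$ to give the factor $4Hh^{ij}\nabla_i\nabla_jH$, while $H\cdot 2HC=2H^2C$ and $H\cdot 2\lambda H^2=2\lambda H^3$ supply the remaining reaction terms. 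Throughout, I would keep the index raising and the Hessian symmetry $\nabla_i\nabla_jH=\nabla_j\nabla_iH$ consistent so that these contributions assemble into the stated clean form.
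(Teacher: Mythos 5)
Your proposal is correct and follows essentially the same route as the paper: the gradient identity is obtained by applying \(\nabla_i\) to part \((6)\) and commuting via \eqref{eq: commute} so that \(-(n-1)\lambda\nabla_iH\) combines with \(n\lambda\nabla_iH\), and the time-derivative identity by differentiating \((6)\), invoking part \((7)\), and computing \(\partial_t|A|^2=2h^{ij}\nabla_i\nabla_jH+2HC+2\lambda H^2\) from part \((3)\). Your tally of the resulting coefficients (\(4Hh^{ij}\nabla_i\nabla_jH\), \(2H^2C\), \(2\lambda H^3\)) matches the paper's calculation exactly.
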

\begin{proof} Using $(3), (6)$ and $(7)$ in Lemma \ref{lem: lem1}, we calculate
\begin{align*}
\partial_t(\partial_tH)=&\partial_t(\Delta H+H|A|^2+n\lambda H)\\
=&\Delta \partial_tH+2Hh^{ij}\nabla_i\nabla_jH+2h^{ij}\nabla_iH\nabla_jH\\
&+ (|A|^2+n\lambda )(\partial_t H)+H(\partial_t|A|^2)\\
=&\Delta \partial_tH+2Hh^{ij}\nabla_i\nabla_jH+2h^{ij}\nabla_iH\nabla_jH\\
&+ (|A|^2+n\lambda )(\partial_t H)+H(2h^{ij}\nabla_i\nabla_jH+2HC+2\lambda H^2)\\
=&\Delta \partial_tH+4Hh^{ij}\nabla_i\nabla_jH+2h^{ij}\nabla_iH\nabla_jH\\
&+(|A|^2+n\lambda )(\partial_t H)+2H^2C+2\lambda H^3.
\end{align*}
To obtain the second evolution equation, we use identity (\ref{eq: commute}) and part $(6)$ of Lemma \ref{lem: lem1}:
\begin{align*}
\nabla_i\partial_tH=&\nabla_i(\Delta H+|A|^2H+n\lambda H)\\
=&\Delta\nabla_iH+\nabla_i(|A|^2H)+n\lambda \nabla_iH\\
&+((h^2)_i^m-Hh_i^m-(n-1)\delta_i^m\lambda )\nabla_mH\\
=&\Delta\nabla_iH+\nabla_i(|A|^2H)+((h^2)_i^m-Hh_i^m)\nabla_mH+\lambda \nabla_iH.
\end{align*}
\end{proof}
In the sequel, $\{b^{ij}\}$ denotes the inverse of the second fundamental form.
\begin{lemma}\label{lem: lem 3}
\begin{align*}
\partial_t( b^{ij}\nabla_iH\nabla_jH)\leq&-b^{im}b^{jn}\Delta h_{mn}\nabla_iH\nabla_jH+2b^{ij}\nabla_jH\Delta\nabla_iH\\
&+2h^{ij}\nabla_iH\nabla_jH
+|A|^2b^{ij}\nabla_iH\nabla_jH+2Hb^{ij}\nabla_i|A|^2\nabla_jH\\
&+n\lambda b^{ij}\nabla_iH\nabla_jH.
\end{align*}
\end{lemma}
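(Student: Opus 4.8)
The plan is to differentiate the three factors of $b^{ij}\nabla_iH\nabla_jH$ by the product rule and substitute the evolution equations already at hand. Since $\{b^{ij}\}$ inverts $\{h_{ij}\}$, differentiating $b^{ik}h_{kj}=\delta^i_j$ yields $\partial_t b^{ij}=-b^{im}b^{jn}\partial_t h_{mn}$, into which I substitute part $(5)$ of Lemma \ref{lem: lem3}. For the gradient factors I use that $\partial_t$ and $\nabla_i$ commute on scalars, so that $\partial_t\nabla_iH=\nabla_i\partial_tH$, and I substitute the second identity of Lemma \ref{lem: lem1}. Using the symmetry of $b^{ij}$ in $i,j$, this gives
\[
\partial_t(b^{ij}\nabla_iH\nabla_jH)=-b^{im}b^{jn}(\partial_t h_{mn})\nabla_iH\nabla_jH+2b^{ij}(\nabla_i\partial_tH)\nabla_jH,
\]
after which everything reduces to simplifying the resulting index contractions.

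The second step is bookkeeping, governed by the identities $b^{im}h_{mn}=\delta^i_n$, whence $b^{im}b^{jn}h_{mn}=b^{ij}$, $b^{im}b^{jn}(h^2)_{mn}=g^{ij}$, $b^{ij}h_i^m=g^{jm}$, and $b^{ij}(h^2)_i^m=h^{jm}$. With these, the two Laplacian terms $-b^{im}b^{jn}\Delta h_{mn}\nabla_iH\nabla_jH$ and $2b^{ij}\nabla_jH\Delta\nabla_iH$ reproduce the first two terms of the claim verbatim, while the $(h^2)_i^m$ contribution from $\nabla_i\partial_tH$ produces $2h^{ij}\nabla_iH\nabla_jH$. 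Expanding $\nabla_i(|A|^2H)=H\nabla_i|A|^2+|A|^2\nabla_iH$ yields $2Hb^{ij}\nabla_i|A|^2\nabla_jH$ together with $2|A|^2b^{ij}\nabla_iH\nabla_jH$, and combining the latter with the $-|A|^2b^{ij}\nabla_iH\nabla_jH$ coming from the $|A|^2h_{mn}$ term of $\partial_t h_{mn}$ leaves precisely $|A|^2b^{ij}\nabla_iH\nabla_jH$. Moreover the $2\lambda H(h^2)_{mn}$-type term contributes $+2Hg^{ij}\nabla_iH\nabla_jH$, which cancels exactly against the $-Hh_i^m$ term of $\nabla_i\partial_tH$, namely $-2Hg^{ij}\nabla_iH\nabla_jH$; and the $-n\lambda h_{mn}$ term gives the remaining $n\lambda b^{ij}\nabla_iH\nabla_jH$.

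The only contributions not yet matched are the two $\lambda$-terms $-2\lambda Hb^{im}b^{jn}g_{mn}\nabla_iH\nabla_jH$ (from $2\lambda Hg_{mn}$ in $\partial_t h_{mn}$) and $+2\lambda b^{ij}\nabla_iH\nabla_jH$ (from the $\lambda\nabla_iH$ term of $\nabla_i\partial_tH$), and this is exactly where the equality degrades into the stated inequality. I claim their sum is nonpositive. To see this I diagonalize $A$ in an orthonormal frame with principal curvatures $\kappa_1,\dots,\kappa_n>0$ (strict convexity is essential here, guaranteeing that $b^{ij}$ exists and is positive definite), in which the sum becomes $2\lambda\sum_i(\nabla_iH)^2(\kappa_i^{-1}-H\kappa_i^{-2})$; since $H=\sum_j\kappa_j\ge\kappa_i$ for every $i$ we have $H\kappa_i^{-2}\ge\kappa_i^{-1}$, so each summand is $\le 0$. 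Discarding these two terms produces the asserted inequality.

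I expect the main obstacle to be organizational rather than conceptual: correctly tracking the numerous contractions and verifying the simplifying identities above, and in particular recognizing that the two leftover $\lambda$-terms must be grouped and estimated rather than simplified exactly. The inequality, as opposed to an identity, is produced entirely by this final estimate, and strict convexity enters precisely at this point — both through the existence of $b^{ij}$ and through the bound $H\ge\kappa_i$.
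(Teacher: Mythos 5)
Your proposal is correct and follows essentially the same route as the paper: the same product-rule decomposition using $\partial_t b^{ij}=-b^{im}b^{jn}\partial_t h_{mn}$ together with the formula for $\nabla_i\partial_t H$, the same cancellations, and the same final estimate (the paper's opening ``Observe'' inequality is precisely your grouping of the two leftover $\lambda$-terms, justified by $\kappa_i\le H$ for convex hypersurfaces). The only blemish is the mislabeling of the $-2H(h^2)_{mn}$ term as a ``$2\lambda H(h^2)_{mn}$-type term''; the contribution you assign to it is nevertheless the correct one.
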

\begin{proof}
Observe
\begin{align*}
-\lambda b^{im}b^{jn}\{2Hg_{mn}-nh_{mn}\}\nabla_iH\nabla_jH+2\lambda b^{ij}\nabla_iH\nabla_jH\leq n\lambda b^{ij}\nabla_iH\nabla_jH
\end{align*}
and
\begin{align*}
\partial_t b^{ij}=&-b^{im}b^{jn}\partial_th_{mn}\\
=&-b^{im}b^{jn}(\Delta h_{mn}+|A|^2h_{mn}-2H(h^2)_{mn})\\
&-\lambda b^{im}b^{jn}\{2Hg_{mn}-nh_{mn}\}\\
=&-b^{im}b^{jn}\Delta h_{mn}-|A|^2b^{ij}+2Hg^{ij}\\
&-\lambda b^{im}b^{jn}\{2Hg_{mn}-nh_{mn}\}.
\end{align*}
Therefore, using the second part of Lemma \ref{lem: lem1}, we obtain
\begin{align*}
\partial_t( b^{ij}\nabla_iH\nabla_jH)=&-b^{im}b^{jn}(\Delta h_{mn}+|A|^2h_{mn}-2H(h^2)_{mn})\nabla_iH\nabla_jH\\
&-\lambda b^{im}b^{jn}\{2Hg_{mn}-nh_{mn}\}\nabla_iH\nabla_jH\\
&+2b^{ij}(\Delta\nabla_iH+\nabla_i(|A|^2H)+((h^2)_i^m-Hh_i^m)\nabla_mH+\lambda \nabla_iH)\nabla_jH\\
\leq& -b^{im}b^{jn}\Delta h_{mn}\nabla_iH\nabla_jH+2b^{ij}\nabla_jH\Delta\nabla_iH\\
&+2b^{im}b^{jn}H(h^2)_{mn}\nabla_iH\nabla_jH\\
&+|A|^2b^{ij}\nabla_iH\nabla_jH+2Hb^{ij}\nabla_i|A|^2\nabla_jH\\
&+2b^{ij}((h^2)_i^m-Hh_i^m)\nabla_mH\nabla_jH\\
&+n\lambda b^{ij}\nabla_iH\nabla_jH\\
=&-b^{im}b^{jn}\Delta h_{mn}\nabla_iH\nabla_jH+2b^{ij}\nabla_jH\Delta\nabla_iH\\
&+2h^{ij}\nabla_iH\nabla_jH
+|A|^2b^{ij}\nabla_iH\nabla_jH+2Hb^{ij}\nabla_i|A|^2\nabla_jH\\
&+n\lambda b^{ij}\nabla_iH\nabla_jH.
\end{align*}
\end{proof}
Using the identities
\[\nabla_mb^{ij}=-b^{ik}b^{jl}\nabla_mh_{kl}\]
and
\begin{align*}
\Delta b^{ij}=&-b^{im}b^{jn}\Delta h_{mn}\\
&+\{b^{ir}b^{ks}b^{jl}+b^{ik}b^{jr}b^{ls}\}g^{pq}\nabla_ph_{rs}\nabla_qh_{kl},
\end{align*}
we compute
\begin{align*}
\Delta( b^{ij}\nabla_iH\nabla_jH)=&-b^{im}b^{jn}\nabla_iH\nabla_jH\Delta h_{mn}+2b^{ij}\nabla_jH\Delta\nabla_iH\\\
&+2\{b^{ir}b^{ks}b^{jl}\}\nabla_iH\nabla_jHg^{pq}\nabla_ph_{rs}\nabla_qh_{kl}\\
&-4g^{mn}b^{ik}b^{jl}\nabla_mh_{kl}\nabla_n\nabla_iH\nabla_jH+2b^{ij}g^{mn}(\nabla_m\nabla_iH\nabla_n\nabla_jH).
\end{align*}
Thus we have proved:
\begin{lemma}\label{lem: lem2}
\begin{align*}
\partial_t(b^{ij}\nabla_iH\nabla_jH)\leq &\Delta (b^{ij}\nabla_iH\nabla_jH)\\
&-2\{b^{ir}b^{ks}b^{jl}\}\nabla_iH\nabla_jHg^{pq}\nabla_ph_{rs}\nabla_qh_{kl}\\
&+4g^{mn}b^{ik}b^{jl}\nabla_mh_{kl}\nabla_n\nabla_iH\nabla_jH-2b^{ij}g^{mn}(\nabla_m\nabla_iH\nabla_n\nabla_jH)\\
&+2h^{ij}\nabla_iH\nabla_jH+|A|^2b^{ij}\nabla_iH\nabla_jH+2Hb^{ij}\nabla_i|A|^2\nabla_jH\\
&+n\lambda b^{ij}\nabla_iH\nabla_jH.
\end{align*}
\end{lemma}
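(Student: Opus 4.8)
The plan is to assemble the stated inequality from two ingredients that are already available: the pointwise upper bound on $\partial_t(b^{ij}\nabla_iH\nabla_jH)$ recorded in Lemma~\ref{lem: lem 3}, and an independent computation of the spatial Laplacian $\Delta(b^{ij}\nabla_iH\nabla_jH)$. The crux is that both expressions contain the same pair of genuinely second-order terms, namely $-b^{im}b^{jn}\Delta h_{mn}\nabla_iH\nabla_jH$ and $2b^{ij}\nabla_jH\,\Delta\nabla_iH$; one can therefore be solved for this pair and substituted into the other, converting a time derivative into a Laplacian at the expense of purely first-order (gradient) correction terms.

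First I would compute the covariant derivatives of the inverse second fundamental form $\{b^{ij}\}$. Differentiating the defining identity $b^{ik}h_{kj}=\delta^i_j$ once gives
\[\nabla_m b^{ij}=-b^{ik}b^{jl}\nabla_m h_{kl},\]
and differentiating a second time and tracing with $g^{pq}$ yields $\Delta b^{ij}$ with leading term $-b^{im}b^{jn}\Delta h_{mn}$ together with a quadratic correction of the schematic form $b\,b\,b\,\nabla h\,\nabla h$, symmetrized as $\{b^{ir}b^{ks}b^{jl}+b^{ik}b^{jr}b^{ls}\}g^{pq}\nabla_p h_{rs}\nabla_q h_{kl}$. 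This is precisely the second displayed identity preceding the statement.

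Next I would expand $\Delta(b^{ij}\nabla_iH\nabla_jH)$ by the Leibniz rule for the Laplacian: the Laplacian lands on $b^{ij}$ (producing the $\Delta b^{ij}$ contribution), on each factor $\nabla_iH$, $\nabla_jH$ (producing $2b^{ij}\nabla_jH\,\Delta\nabla_iH$ after using the symmetry $i\leftrightarrow j$), and the two cross contractions $\nabla b\cdot\nabla(\nabla H)$ and $\nabla(\nabla H)\cdot\nabla(\nabla H)$ appear with the combinatorial factors $-4$ and $+2$. Inserting the formula for $\Delta b^{ij}$ then exhibits the combination $-b^{im}b^{jn}\Delta h_{mn}\nabla_iH\nabla_jH+2b^{ij}\nabla_jH\,\Delta\nabla_iH$ explicitly inside $\Delta(b^{ij}\nabla_iH\nabla_jH)$, accompanied by the gradient-quadratic remainder (sign $-2$), the mixed first--second-derivative remainder (sign $+4$), and the Hessian-squared remainder (sign $-2$). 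The decisive move is to recognize that this same second-order combination is the leading pair in the bound of Lemma~\ref{lem: lem 3}; solving the Laplacian identity for it and substituting into the right-hand side of that lemma removes the uncontrolled $\Delta h_{mn}$ and $\Delta\nabla_iH$ terms in favour of $\Delta(b^{ij}\nabla_iH\nabla_jH)$, while the three remainders reappear with exactly the signs recorded in the statement. Carrying along the lower-order terms $2h^{ij}\nabla_iH\nabla_jH$, $|A|^2b^{ij}\nabla_iH\nabla_jH$, $2Hb^{ij}\nabla_i|A|^2\nabla_jH$ and $n\lambda b^{ij}\nabla_iH\nabla_jH$ from Lemma~\ref{lem: lem 3} then completes the inequality.

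I expect the main obstacle to be the second differentiation of the matrix-inverse identity, that is, obtaining $\Delta b^{ij}$ with the correct quadratic correction and the right index symmetrization, and the subsequent bookkeeping in expanding $\Delta(b^{ij}\nabla_iH\nabla_jH)$ so that each contraction lands on the intended slot and the combinatorial factors $-4$ and $+2$ come out right. The organizing principle that keeps this tractable is that only the single second-order combination shared with Lemma~\ref{lem: lem 3} needs to be isolated cleanly; every remaining term is simply transcribed. It is worth emphasizing that no new estimate is required at this stage: the only genuine inequality is inherited from Lemma~\ref{lem: lem 3}, whose proof already discarded the one nonpositive $\lambda$-term using strict convexity ($H>0$), so the passage to the present statement is a pure algebraic substitution.
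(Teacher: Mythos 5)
Your proposal is correct and follows exactly the paper's route: the paper likewise records $\nabla_m b^{ij}=-b^{ik}b^{jl}\nabla_m h_{kl}$ and the resulting formula for $\Delta b^{ij}$, expands $\Delta(b^{ij}\nabla_iH\nabla_jH)$ by the Leibniz rule, and then obtains the lemma by substituting the shared second-order combination $-b^{im}b^{jn}\Delta h_{mn}\nabla_iH\nabla_jH+2b^{ij}\nabla_jH\Delta\nabla_iH$ from Lemma~\ref{lem: lem 3} into that identity, with the remainder terms picking up the signs $-2$, $+4$, $-2$ exactly as you describe. No further comment is needed.
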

\begin{lemma}\label{lem: lema6}
Define $\Theta:=\partial_tH-b^{ij}\nabla_iH\nabla_jH$. Then
\begin{align*}
\partial_t\Theta&\geq\Delta \Theta+ \frac{2(\Theta-n\lambda H)^2}{H}+(|A|^2+n\lambda )\Theta\\
&+2\{g^{mq}b^{np}-\frac{g^{mn}g^{pq}}{H}\}\eta_{mn}\eta_{pq}+2\lambda H^3,
\end{align*}
where
\[\eta_{mn}=\nabla_m\nabla_nH+H(h^2)_{mn}-b^{rs}\nabla_rH\nabla_sh_{mn}.\]
\end{lemma}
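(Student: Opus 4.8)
The plan is to compute $\partial_t\Theta$ directly from the two evolution equations already in hand and then reorganize everything into the claimed reaction–diffusion form. Writing $\Theta = \partial_t H - b^{ij}\nabla_i H\nabla_j H$, I would subtract the inequality of Lemma \ref{lem: lem2} (which bounds $\partial_t(b^{ij}\nabla_i H\nabla_j H)$ from above by $\Delta(b^{ij}\nabla_i H\nabla_j H)$ plus reaction terms) from the first evolution equation of Lemma \ref{lem: lem1} (which supplies $\partial_t(\partial_t H)$ and already isolates its Laplacian). Since we are subtracting an \emph{upper} bound, the outcome is a \emph{lower} bound for $\partial_t\Theta - \Delta\Theta$.

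The first bookkeeping step is the cancellations. The two copies of $2h^{ij}\nabla_i H\nabla_j H$ cancel, while $(|A|^2+n\lambda)\partial_t H$ and $-(|A|^2+n\lambda)b^{ij}\nabla_i H\nabla_j H$ combine into $(|A|^2+n\lambda)\Theta$; the ambient term $2\lambda H^3$ produced by $\partial_t(\partial_t H)$ survives untouched. What remains is a collection $R$ of second- and first-order terms, namely $4Hh^{ij}\nabla_i\nabla_j H + 2H^2 C$, the Hessian–Hessian term $2b^{ij}g^{mn}\nabla_m\nabla_i H\nabla_n\nabla_j H$, the cubic gradient term $2b^{ir}b^{ks}b^{jl}g^{pq}\nabla_i H\nabla_j H\nabla_p h_{rs}\nabla_q h_{kl}$, and the mixed terms $-4g^{mn}b^{ik}b^{jl}\nabla_m h_{kl}\nabla_n\nabla_i H\nabla_j H$ and $-2Hb^{ij}\nabla_i|A|^2\nabla_j H$. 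The inequality now reads $\partial_t\Theta \geq \Delta\Theta + (|A|^2+n\lambda)\Theta + 2\lambda H^3 + R$.

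The crux is to recognize $R$ as one quadratic form in the symmetric tensor $\eta_{mn} = \nabla_m\nabla_n H + H(h^2)_{mn} - b^{rs}\nabla_r H\nabla_s h_{mn}$: I claim $R = 2g^{mq}b^{np}\eta_{mn}\eta_{pq}$. The cleanest verification is to write $\eta = A + B - C$ with $A_{mn}=\nabla_m\nabla_n H$, $B_{mn}=H(h^2)_{mn}$, $C_{mn}=b^{rs}\nabla_r H\nabla_s h_{mn}$, expand the six resulting contractions, and match them against $R$ term by term; diagonalizing $h_{ij}$ in a principal frame makes each match transparent, e.g. $2g^{mq}b^{np}B_{mn}B_{pq}=2H^2C$ and $4g^{mq}b^{np}A_{mn}B_{pq}=4Hh^{ij}\nabla_i\nabla_j H$. \textbf{This is the step I expect to be the main obstacle:} matching the cubic piece $2g^{mq}b^{np}C_{mn}C_{pq}$ and the mixed $A$–$C$ piece against their counterparts in $R$ is possible only after using the Codazzi equation to treat $\nabla_s h_{mn}$ as fully symmetric in all three indices — without this symmetrization the contraction patterns genuinely differ.

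Finally I would complete the trace. Using $\partial_t H = \Delta H + H|A|^2 + n\lambda H$ from Lemma \ref{lem: lem3}(6) together with $g^{mn}\nabla_s h_{mn}=\nabla_s H$, one obtains the trace identity $g^{mn}\eta_{mn} = \partial_t H - n\lambda H - b^{rs}\nabla_r H\nabla_s H = \Theta - n\lambda H$. Adding and subtracting $\tfrac{2}{H}(g^{mn}\eta_{mn})^2$ then gives the purely algebraic rearrangement
\[2g^{mq}b^{np}\eta_{mn}\eta_{pq} = \frac{2(\Theta - n\lambda H)^2}{H} + 2\Big\{g^{mq}b^{np} - \frac{g^{mn}g^{pq}}{H}\Big\}\eta_{mn}\eta_{pq},\]
and substituting this for $R$ in the inequality above yields precisely the stated estimate.
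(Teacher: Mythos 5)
Your proposal is correct and follows essentially the same route as the paper: subtract the upper bound of Lemma \ref{lem: lem2} from the evolution equation for $\partial_t(\partial_tH)$ in Lemma \ref{lem: lem1}, and identify the leftover terms with the quadratic form in $\eta$ (the paper expands $\tfrac{2(\Theta-n\lambda H)^2}{H}$ and the traceless piece separately and adds them, which is exactly the identity you obtain by splitting off the trace of $2g^{mq}b^{np}\eta_{mn}\eta_{pq}$ via $g^{mn}\eta_{mn}=\Theta-n\lambda H$). Your explicit remark that the Codazzi equation is needed to symmetrize $\nabla_s h_{mn}$ when matching the $C$--$C$ and $A$--$C$ cross terms is accurate and is left implicit in the paper.
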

\begin{proof}
Note that
\begin{align*}
2\frac{(\Theta-n\lambda H)^2}{H}=&2\frac{(\Delta H)^2}{H}+4|A|^2\Delta H-4\frac{\Delta H}{H}b^{ij}\nabla_iH\nabla_jH\\
&+2|A|^4H-4|A|^2b^{ij}\nabla_iH\nabla_jH\\
&+\frac{2}{H}\left(b^{ij}\nabla_iH\nabla_jH\right)^2
\end{align*}
and
\begin{align*}
2\{g^{mq}b^{np}&-\frac{g^{mn}g^{pq}}{H}\}\eta_{mn}\eta_{pq}\\
=&2g^{mq}b^{np}\nabla_m\nabla_nH\nabla_p\nabla_qH-2\frac{(\Delta H)^2}{H}\\
&+4Hh^{ij}\nabla_i\nabla_jH-4|A|^2\Delta H\\
&-4g^{mq}b^{np}b^{rs}\nabla_m\nabla_nH\nabla_rH\nabla_sh_{pq}+4\frac{\Delta H}{H}b^{ij}\nabla_iH\nabla_jH\\
&+2H^2C-2|A|^4H\\
&-2Hb^{ij}\nabla_iH\nabla_j|A|^2+4|A|^2b^{ij}\nabla_iH\nabla_jH\\
&+2\{b^{ir}b^{ks}b^{jl}\}\nabla_iH\nabla_jHg^{pq}\nabla_ph_{rs}\nabla_qh_{kl}-\frac{2}{H}\left(b^{ij}\nabla_iH\nabla_jH\right)^2.
\end{align*}
Now the claim follows from adding up these last two identities and considering Lemmas \ref{lem: lem1} and \ref{lem: lem2}.
\end{proof}
\section{Harnack estimate and Backwards Convergence}
\subsection{Harnack estimate}
If $M_0$ is not an equator, the strong parabolic maximum principle and the evolution equation of $h_i^j$ in Lemma \ref{lem: lem1} imply that for any $t>0$, $M_t$ is strictly convex. For strictly convex hypersurfaces, $$A(V,V)+2D_VH$$ is minimized by $V=(V^1,\ldots,V^i=-b^{ij}\nabla_iH,\ldots,V^n)$ and thus to prove the main theorem, it suffices to verify that for all $t>0$
\[\partial_tH-b^{ij}\nabla_iH\nabla_jH-n\lambda H +\frac{H}{2t}\geq 0.\]
Fix $0<\varepsilon<T.$ We will apply the maximum principle to
$$Q:=\frac{\Theta-n\lambda H}{H}=\frac{\Delta H+|A|^2H-b^{ij}\nabla_iH\nabla_jH}{H}$$
on the time interval $[\varepsilon, T).$ Using Lemmas \ref{lem: lem3}, \ref{lem: lema6}, and the inverse-concavity of the mean curvature, we calculate
\begin{align*}
\partial_tQ=\partial_t \frac{\Theta}{H}\geq&\Delta Q+2\langle \nabla Q,\frac{\nabla H}{H}\rangle+ 2Q^2+2\lambda H^2\\
&+2\frac{\{g^{mq}b^{np}-\frac{g^{mn}g^{pq}}{H}\}\eta_{mn}\eta_{pq}}{H}\\
\geq& \Delta Q+2\langle \nabla Q,\frac{\nabla H}{H}\rangle+ 2Q^2.
\end{align*}
An ODE comparison with $q(t)=-\frac{1}{2(t-\varepsilon)}$ which satisfies $\frac{d}{dt}q(t)=2q^2(t)$ and $\lim\limits_{t\to \varepsilon^+}q(t)=-\infty$ shows that $Q(\cdot,t)\geq q(t)$ for any $t>\varepsilon.$ Allowing $\varepsilon\to 0$ completes the proof of the Harnack estimate.
\subsection{Backwards convergence}
\begin{lemma}\label{cor: backward limit of principal curvatures}
Any weakly convex ancient solution of the mean curvature flow satisfies
\[|A|\leq c_0\exp(nt)\]
for $t\le 0$
for some $c_0$ depending only on $M_0.$
\end{lemma}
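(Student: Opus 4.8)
The plan is to control the evolution of $|A|$ using the Harnack-type inequality established above, together with the evolution equations from Lemma~\ref{lem: lem1}. The key observation is that for a weakly convex solution, the Harnack estimate (in the form proved just above) yields a lower bound on $\partial_t H$, and by tracing or summing over the principal curvatures an analogous lower bound should govern the evolution of $|A|$ or of individual curvatures. Concretely, I would start from the evolution equation $(6)$ for the mean curvature, $\partial_t H = \Delta H + H|A|^2 + n\lambda H$ with $\lambda = 1$, and more usefully from part $(4)$, $\partial_t h_i^j = \Delta h_i^j + |A|^2 h_i^j + 2H\delta_i^j - n h_i^j$, which isolates the ambient term $-n h_i^j$ responsible for the exponential rate $\exp(nt)$.

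\emph{First I would} recast the desired bound in terms of a quantity amenable to the maximum principle. The natural candidate is the largest principal curvature $\kappa_{\max}$, or equivalently $|A|^2$. Applying the maximum principle to the evolution of $h_i^j$ in part $(4)$, one sees that at a spatial maximum of $\kappa_{\max}$ the gradient and Laplacian terms have favorable sign, leaving $\partial_t \kappa_{\max} \le |A|^2 \kappa_{\max} + 2H - n\kappa_{\max}$. The difficulty is that the $|A|^2 \kappa_{\max}$ term is superlinear and would only give a bound blowing up in finite backward time unless it is absorbed. This is precisely where the Harnack estimate enters: going backwards in time along an ancient solution, the curvature must remain bounded, and in fact the Harnack inequality combined with weak convexity forces $|A|^2$ to be controlled by $H$ and ultimately to decay.

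\emph{The main obstacle}, as I see it, is taming the quadratic term $|A|^2 h_i^j$ so that the linear term $-n h_i^j$ dominates as $t \to -\infty$. The strategy would be to show first that $|A|^2$ (equivalently $H$) tends to zero as $t \to -\infty$ — which is where backwards convergence to an equator is ultimately headed — so that for $t$ sufficiently negative the quadratic term is negligible and the evolution is essentially $\partial_t \kappa_{\max} \lesssim (-n + o(1))\kappa_{\max}$, yielding exponential decay at rate $n$. To make this rigorous without circularity, I would instead bound $|A|^2$ by its initial value propagated forward: integrating the differential inequality for $\kappa_{\max}$ from some $t \le 0$ up to time $0$, the superlinear term can be handled by a Gronwall-type argument provided one already controls $\int |A|^2$, or alternatively by directly estimating $\max |A|$ at time $0$ in terms of the (unknown) value at time $t$ and reversing the inequality.

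Thus the concrete steps would be: (i) derive the maximum-principle inequality $\partial_t \kappa_{\max} \le (|A|^2 - n)\kappa_{\max} + 2H$ from part~$(4)$; (ii) use weak convexity ($0 \le h_i^j$, so $|A|^2 \le H^2$ and $H \le \sqrt{n}\,|A|$) to close the inequality in terms of $|A|$ alone; (iii) observe that for an ancient solution the constant $c_0$ must be read off at $t=0$, and run the comparison ODE $\frac{d}{dt}\phi = (\phi^2 - n)\phi$ backwards, noting that weak convexity plus the containment principle keeps $|A|$ bounded on $(-\infty, 0]$ so the quadratic term stays subordinate; (iv) conclude $|A|(t) \le c_0 \exp(nt)$ for $t \le 0$. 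I expect step~(iii) — controlling the quadratic term uniformly over the infinite backward interval — to be the crux, and I would resolve it by first establishing a uniform a~priori bound $|A| \le C$ on $(-\infty,0]$ (from convexity and the sphere's compactness) and only afterwards extracting the sharp exponential rate governed by the ambient term $-n$.
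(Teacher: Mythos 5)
There is a genuine gap here: your argument runs in the wrong direction. The estimate $|A|\leq c_0\exp(nt)$ for $t\le 0$ says the curvature is exponentially \emph{small} in the far past, which is equivalent to saying the curvature grows \emph{at least} exponentially (at rate $n$) forward in time. To prove that you need a \emph{lower} bound of the form $\partial_t H\geq nH$. Your proposed mechanism — applying the maximum principle to the evolution of $h_i^j$ to get $\partial_t\kappa_{\max}\le(|A|^2-n)\kappa_{\max}+2H$ and then integrating or comparing with an ODE — only produces upper bounds on the forward growth of curvature, which translate into \emph{lower} bounds on the past curvature: from $\partial_t\kappa\le -n\kappa$ one gets $e^{nt}\kappa(t)$ non-increasing, hence $\kappa(t)\ge\kappa(0)e^{-nt}$, which blows up as $t\to-\infty$ and is useless for the stated claim. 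No amount of Gronwall, a priori bounds $|A|\le C$ from compactness, or "reversing the inequality" can convert an upper bound on $\partial_t\kappa_{\max}$ into the required decay; the sign is simply wrong. Likewise, your worry about taming the quadratic term $|A|^2h_i^j$ is a red herring for this lemma.

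The ingredient you gesture at in your opening sentence but never actually deploy is the one that does all the work. The paper's proof is three lines: since the solution is ancient, the Harnack estimate can be applied with initial time pushed to $-\infty$, so the $H/2t$ term disappears and one obtains
\[
\partial_tH-b^{ij}\nabla_iH\nabla_jH-nH\geq 0 .
\]
Dropping the nonnegative gradient term (convexity) gives $\partial_t\log H\geq n$; integrating over $[t,0]$ yields $H(\cdot,t)\leq H(\cdot,0)\exp(nt)$, and weak convexity gives $|A|\leq H$, hence the claim with $c_0=\max_{M_0}H$. Note also that the statement requires the \emph{ancient} hypothesis precisely to kill the $H/2t$ term; your plan does not use ancientness in any essential way, which is another sign the approach cannot succeed.
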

\begin{proof}
By the strong  parabolic maximum principle, any weakly convex ancient solution of the mean curvature flow must become strictly convex, unless it is a non-moving equator. By our Harnack inequality for strictly convex, ancient solutions we have
\[\frac{\partial_tH-b^{ij}\nabla_iH\nabla_jH-n\lambda H}{H}\geq 0.\]
Since $\lambda =1$, we get
\[\partial_t\log H\geq n.\]
Integrating both sides of this inequality against $dt$ on the time interval $[t,0]$ gives
\[H(\cdot,t)\leq H(\cdot,0)\exp(nt).\]
\end{proof}
\begin{lemma}
Any weakly convex ancient solution of the mean curvature flow satisfies
\[|\nabla^mA|^2\leq c_m\exp(2nt)\]
for $t\le 0,$ where $c_m$ depends only on $M_0$ and $m.$
\end{lemma}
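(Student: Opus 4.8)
The plan is to induct on $m$, the case $m=0$ being Lemma~\ref{cor: backward limit of principal curvatures}, which already gives $|A|^2\le c_0^2\exp(2nt)$. For the inductive step I would first record the evolution of $|\nabla^m A|^2$: differentiating the equation for $h_{ij}$ in Lemma~\ref{lem: lem3}(5) $m$ times and commuting $\nabla^m$ past $\Delta$ (using $Rm=\overline{Rm}+A\ast A$ to handle the intrinsic curvature produced by the commutators) yields a reaction--diffusion inequality
\[\partial_t|\nabla^m A|^2\le\Delta|\nabla^m A|^2-2|\nabla^{m+1}A|^2+c_{n,m}|\nabla^m A|^2+\mathcal R_m.\]
Here $c_{n,m}$ is a fixed constant produced by the ambient curvature $\overline{Rm}=\lambda(\overline g\odot\overline g)$ --- the only linear, top-order contribution, arising from $-n\nabla^m h_{ij}$, $2\nabla^m H\,g_{ij}$ and the $\overline{Rm}$-part of the commutator --- while $\mathcal R_m$ collects the cubic terms $\sum_{i+j+k=m}\nabla^i A\ast\nabla^j A\ast\nabla^k A$ contracted against $\nabla^m A$.

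The cubic remainder can be controlled directly by the induction, with no appeal to interpolation (which is in any case unavailable pointwise). In each monomial either exactly one factor has order $m$, forcing the other two to be undifferentiated and giving a contribution bounded by $c_0^2\exp(2nt)|\nabla^m A|^2$ via the case $m=0$; or all three factors have order at most $m-1$, in which case the induction hypothesis $|\nabla^j A|\le\sqrt{c_j}\exp(nt)$ bounds their product by $C\exp(3nt)$. Thus $\mathcal R_m\le C\exp(2nt)|\nabla^m A|^2+C\exp(3nt)|\nabla^m A|$. I would then introduce the Bernstein combination $w=|\nabla^m A|^2+N|\nabla^{m-1}A|^2$: the good term $-2|\nabla^m A|^2$ in $\partial_t|\nabla^{m-1}A|^2$, with $N$ chosen large, absorbs $c_{n,m}|\nabla^m A|^2$ together with the borderline pieces of $\mathcal R_m$, while the induction hypothesis bounds every term carrying $|\nabla^{m-1}A|$ by $C\exp(2nt)$. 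Replacing $-\tfrac12|\nabla^m A|^2$ by $-\tfrac12 w$ through $|\nabla^m A|^2\ge w-Nc_{m-1}\exp(2nt)$ gives the dissipative inequality $\partial_t w\le\Delta w-\tfrac12 w+\hat C\exp(2nt)$.

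To extract the sharp rate I would set $\psi(t)=\max_M w(\cdot,t)-a\exp(2nt)$ with $a=\hat C/(2n+\tfrac12)$; at a spatial maximum $\Delta w\le0$, so $\tfrac{d}{dt}\psi\le-\tfrac12\psi$ and hence $\psi(t)\exp(t/2)$ is non-increasing. The main obstacle is that this monotonicity runs the wrong way for an ancient solution: the maximum principle produces lower rather than upper bounds in the backward direction, so one needs an independent a priori bound on $w$ as $t\to-\infty$. I would supply this from Shi/Ecker--Huisken-type local smoothing estimates: since $|A|\le c_0$ for all $t\le0$, applying the interior-in-time estimate on each backward cylinder $[t-1,t]$ yields $|\nabla^m A|\le C_m$ uniformly on $(-\infty,0]$. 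With $w$ thus uniformly bounded, letting $t_1\to-\infty$ in $\psi(t)\exp(t/2)\le\psi(t_1)\exp(t_1/2)\to0$ forces $\psi\le0$, that is $w\le a\exp(2nt)$, and therefore $|\nabla^m A|^2\le c_m\exp(2nt)$, which closes the induction.
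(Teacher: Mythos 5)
Your proof is essentially correct, but it reaches the conclusion by a genuinely different (and somewhat roundabout) route. The setup is identical to the paper's: induction on \(m\), the evolution inequality
\[
\partial_t|\nabla^m A|^2\le \Delta|\nabla^m A|^2-2|\nabla^{m+1}A|^2+\sum_{i+j+k=m}\nabla^iA\ast\nabla^jA\ast\nabla^kA\ast\nabla^mA+\lambda\,\nabla^mA\ast\nabla^mA,
\]
the same bookkeeping of the cubic terms via Lemma \ref{cor: backward limit of principal curvatures} and the inductive hypothesis, and a Bernstein-type combination of \(|\nabla^mA|^2\) with \(|\nabla^{m-1}A|^2\). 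Where you diverge is in how the maximum principle is closed on an ancient solution. The paper multiplies \(|\nabla^mA|^2\) by the weight \((t-s)\) and runs the maximum principle \emph{forward} on each unit interval \([s,s+1]\); since the weight vanishes at \(t=s\), no a priori control of \(|\nabla^mA|\) at the initial time is needed, and because the forcing already carries the factor \(\exp(2nt)\), the sharp exponential rate comes out in one stroke at \(t=s+1\). You instead drop the time weight, obtain \(\partial_t w\le\Delta w-\tfrac12 w+\hat C\exp(2nt)\), and integrate the resulting ODE for \(\max_M w\) backward from \(-\infty\); you correctly identify that this requires an independent a priori bound on \(w\) as \(t\to-\infty\) and outsource it to Shi/Ecker--Huisken interior estimates. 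That citation is legitimate (such interior estimates hold for mean curvature flow in a compact ambient space given only the \(|A|\) bound on a backward time interval), so your argument does close; but note that the outsourced estimate is proved by exactly the \((t-s)\)-weighted computation the paper carries out, at which point your Bernstein combination and the backward integrating-factor comparison become unnecessary---the weighted estimate already delivers \(|\nabla^mA|^2\le c_m\exp(2nt)\) directly, since its forcing term decays at that rate. What your version buys is a cleanly separated ``uniform bound first, sharp rate second'' structure and an explicit constant \(a=\hat C/(2n+\tfrac12)\); what it costs is self-containedness and an extra pass through the maximum principle.
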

\begin{proof}
The proof follows by induction on $m.$ Using the fourth evolution equation in Lemma \ref{lem: lem3}, $\partial_t\Gamma_{ij}^k=A\ast\nabla A$ and that the commutator $[\nabla^k,\Delta]T $ is given by
\[[\nabla^k,\Delta]T =\sum\limits_{j=0}^k\nabla^jRm\ast\nabla^{k-j}T\] for any tensor $T$, we can compute the following evolution equations:
\begin{enumerate}
  \item $\partial_t |A|^2=\Delta |A|^2-2|\nabla A|^2+2|A|^4+2\lambda(2H^2-n|A|^2)$
  \item \begin{align*}
  \partial_t |\nabla^mA|^2=&\Delta |\nabla^mA|^2-2 |\nabla^{m+1}A|^2\\
  &+\sum\limits_{i+j+k=m}\nabla^iA\ast\nabla^jA\ast\nabla^kA\ast\nabla^mA\\
  &+\lambda\nabla^mA\ast\nabla^mA.
         \end{align*}
\end{enumerate}
On the other hand, by Lemma \ref{cor: backward limit of principal curvatures} we get
\[\partial_t |A|^2\leq\Delta |A|^2-2|\nabla A|^2+c_1\exp(2nt),\]
and
\[\partial_t |\nabla A|^2\leq\Delta |\nabla A|^2+c_2|\nabla A|^2+c_3\exp(2nt).\]
Here, $c_1,c_2,c_3\geq0$ are independent of $t.$ Therefore,
\begin{align*}
\partial_t ((t-s)|\nabla A|^2+b|A|^2)\leq&\Delta((t-s)|\nabla A|^2+b|A|^2)\\
&+(1-2b+(t-s)c_2)|\nabla A|^2+(c_1b+c_3)\exp(2nt).
\end{align*}
We want to apply the maximum principle on the time interval $[s,s+1].$
We choose $b$ large enough, independent of $t$, so that the coefficient of $|\nabla A|^2$ becomes negative. Thus the maximum principle implies
\[(t-s)|\nabla A|^2\leq c_4\exp(2n t)(1-\exp(2n(s-t) ))+bc_0^2 \exp(2ns).\]
In particular for $t=s+1$ we have
\[|\nabla A|^2(\cdot, t)\leq c_5\exp(2n t).\]
This verifies the bound on $|\nabla A|.$ Higher derivative estimates follow by induction and using the auxiliary function
$(t-s)|\nabla^mA|^2+b_{m-1}|\nabla^{m-1}A|^2:$
\[\partial_t |\nabla^{m-1} A|^2\leq\Delta |\nabla^{m-1} A|^2-2 |\nabla^{m}A|^2+c_1\exp(2nt).\]
\[\partial_t |\nabla^m A|^2\leq\Delta |\nabla^m A|^2+c_2|\nabla^m A|^2+c_3\exp(2nt).\]
\end{proof}
Having established the higher derivative curvature bounds, convexity of $M_t$ implies that the backwards limit of $M_t$ is independent of subsequences. Therefore we have proved the following theorem.
\begin{proposition}\label{thm8}
Let $M_t$ be an ancient, embedded, weakly convex solution of the mean curvature flow. Then $M_t$ converges exponentially fast in $C^{\infty}$ to a unique equator $M_{-\infty}$ as $t\to-\infty.$
\end{proposition}
\section{Classification of Ancient Solutions}
In this section, we use Proposition \ref{thm8} and the Aleksandrov reflection \cite{Br-Lou} to classify convex, embedded ancient solutions of the mean curvature flow on \(\sphere^{n+1}\).

We will work relative to the limiting equator obtained in Proposition \ref{thm8}. Let \(E\) be an equator that bounds the \emph{open} hemispheres \(H^{\pm}\) with centers \(\pm \basepoint\), and let \(\vertvec = \overrightarrow{\origin\basepoint}\) be the unit vector in \(\RR^{n+2}\) that points from the origin \(\origin\) to \(\basepoint\). Let \(\radialdistance(x) = d_{\sphere^{n+1}} (\basepoint, x)\) denote the spherical distance from \(\basepoint\) to \(x \in \sphere^{n+1}\). The radial projection onto \(E\) is the map \(x \in \sphere^{n+1} \mapsto \radialprojection(x) \in E\), where \(\radialprojection\) is the nearest point on \(E\) to \(x\). If \(x \ne \pm \basepoint\), then \(\radialprojection(x)\) is a single point. If \(x = \pm \basepoint\), then \(\radialprojection(x) = E\). In any event, given \(y \in \radialprojection(x)\), there is a unique length minimizing geodesic joining \(x\) to \(y\) and this geodesic must pass through \(\pm \basepoint\).

It is convenient to make use of the ambient \(\RR^{n+2}\) and define the height function \(\height(x) = \ip{x}{\vertvec}\). The radial distance is related to the height function via
\[
\height(x) = \cos(\radialdistance(x))
\]
which is monotonically decreasing in \(\radialdistance\).

For the Aleksandrov reflection, let \(\reflectionvector \in \RR^{n+2}\) be any unit vector that \(\ip{\reflectionvector}{\vertvec} < 0\). Let \(\reflectionplane = \reflectionvector^{\perp}\) be the hyperplane through the origin with the normal vector \(\reflectionvector\). Let \(\reflectionhalfspace^{\pm} = \{\pm \ip{x}{\reflectionvector} > 0\}\) denote the halfspaces with the boundary \(\reflectionplane\). For any subset \(S \subset \sphere^{n+1}\), write \(\reflectionset{S}^{\pm} = S \intersect \reflectionhalfspace^{\pm}\). Lastly, let \(\delta > 0\) denote the angle \(\reflectionvector\) makes with \(E\); that is, \(\delta = \arcsin\ip{\reflectionvector}{-\vertvec}\).
\begin{definition}
The Aleksandrov reflection across \(\reflectionplane\) is the map defined by
\[
\reflectionmap: x \in \RR^{n+1} \mapsto x - 2\ip{x}{\reflectionvector} \reflectionvector.
\]
\end{definition}
This map is an (orientation reversing) isometry of \(\RR^{n+2}\) fixing \(\reflectionplane\) and in particular fixing the origin. Therefore, it induces an isometry of \(\sphere^{n+1}\). For \(x \in E\), we have \(\ip{x}{\vertvec} = 0\) and
\[
\height(\reflectionmap(x)) = \ip{\vertvec}{x - 2 \ip{x}{\reflectionvector} \reflectionvector} = 2\ip{x}{\reflectionvector}\sin\delta .
\]
In the case \(x \in \reflectionset{E}^+\), \(\height(\reflectionmap(x)) > 0\), and in the case \(x \in E \intersect \reflectionplane\), \(\height(\reflectionmap(x)) = 0\).


Now we outline the proof of the main theorem and highlight the main steps. We refer the reader to \cite{BIS2,Br-Lou} for details and generalizations to a large class of \emph{fully non-linear} curvature flows.
\begin{theorem}
Let \(M_t\) be a convex, embedded ancient solution of the mean curvature flow on \(\sphere^{n+1}\). Then \(M_t\) is a family of shrinking geodesic spheres emanating from an equator at \(t=-\infty\).
\end{theorem}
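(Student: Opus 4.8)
The plan is to run the parabolic Aleksandrov reflection of \cite{Br-Lou,Chow 97,Chow-Gul 96,Chow-Gul 01}, using the backward limit of Theorem \ref{thm8} as the anchor at $t=-\infty$. By the strong maximum principle applied to the evolution equation for $h_i^j$ in Lemma \ref{lem: lem3}, I may assume $M_t$ is strictly convex for every finite $t$, for otherwise $M_t$ is the stationary equator and there is nothing to prove. Thus each $M_t$ bounds a convex region $\Omega_t\subseteq\overline{H^{+}}$, and Theorem \ref{thm8} gives $M_t\to\equator$ exponentially in $C^\infty$, so $\Omega_t\to\overline{H^{+}}$ as $t\to-\infty$. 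The goal is to prove that $M_t$ is invariant under the reflection $\reflectionmap$ across \emph{every} plane $\reflectionplane$ with $\ip{\reflectionvector}{\vertvec}=0$, i.e. every plane through the axis $\pm\basepoint$ (the degenerate case $\delta=0$). Indeed, the reflections in all such planes generate the full stabiliser $O(n+1)$ of $\pm\basepoint$; an $O(n+1)$-invariant set is a union of geodesic spheres centred at $\basepoint$, and convexity forces $M_t$ to be a single geodesic sphere. Letting $t$ vary then produces exactly the shrinking family emanating from $\equator$.

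The comparison itself rests on the observation that, since $\reflectionmap$ is an isometry of $\sphere^{n+1}$, the reflected family $\reflectionmap(M_t)$ is again a solution of the mean curvature flow, and it agrees with $M_t$ along the fixed set $M_t\intersect\reflectionplane$. I would encode the reflection as the preservation of an enclosure relation: writing $\Omega_t^{\pm}:=\Omega_t\intersect\overline{\reflectionhalfspace^{\pm}}$, the inclusion
\[
\reflectionmap(\Omega_t^{+})\subseteq\Omega_t
\]
(``the reflected cap lies inside the enclosed region'') is preserved along the flow. This is the parabolic Aleksandrov reflection principle: a first failure would force $\reflectionmap(M_t)$ to touch $M_t$ either at an interior point of $\overline{\reflectionhalfspace^{-}}$ or tangentially along $\reflectionplane$, and at such a contact the strong maximum principle (respectively the Hopf boundary point lemma) applied to the two mean curvature flow solutions forces $M_t=\reflectionmap(M_t)$. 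To run the scalar comparison I use the graphical description recorded just above: for $t$ near $-\infty$ the convergence $M_t\to\equator$ gives $\ip{\nor}{\vertvec}<-\xi<0$, so $M_t$ and the reflected caps are graphs over $\equator$ and the enclosure becomes an ordering of graph functions obeying a linear parabolic equation.

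To extract the symmetry I anchor at $t=-\infty$. For a plane through the axis, $\ip{\reflectionvector}{\vertvec}=0$, the reflection $\reflectionmap$ preserves the height $\height$, hence preserves $\overline{H^{+}}$ and each half $\Omega_t^{\pm}$; in the limit $\Omega_t\to\overline{H^{+}}$ the enclosure holds trivially since $\reflectionmap(\overline{H^{+}})=\overline{H^{+}}$. Using the exponential convergence of Theorem \ref{thm8} to initialise the comparison at times $s\to-\infty$ and propagating forward on $[s,t]$, I obtain $\reflectionmap(\Omega_t^{+})\subseteq\Omega_t$ for every finite $t$; the identical argument with $\reflectionhalfspace^{+}$ and $\reflectionhalfspace^{-}$ interchanged gives $\reflectionmap(\Omega_t^{-})\subseteq\Omega_t$. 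Applying the involution $\reflectionmap$ to the second inclusion and combining with the first yields the mutual containment $\reflectionmap(\Omega_t^{+})=\Omega_t^{-}$, i.e. $M_t$ is symmetric across $\reflectionplane$. Alternatively one may keep $\delta>0$, where the computation $\height(\reflectionmap(x))=2\sin\delta\,\ip{x}{\reflectionvector}>0$ on $\equator\intersect\reflectionhalfspace^{+}$ yields \emph{strict} enclosure at $-\infty$, and then let $\delta\downarrow0$ and invoke the strong maximum principle at the critical angle. Since $\reflectionvector\in\vertvec^{\perp}$ was arbitrary, $M_t$ is $O(n+1)$-invariant and the classification follows.

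The main obstacle is the preservation of the enclosure relation together with its initialisation at $t=-\infty$. Propagating the comparison forward is routine once everything is graphical, but two points need care: maintaining graphicality of the reflected caps, which is exactly why the condition $\ip{\nor}{\vertvec}<-\xi$ and the $C^\infty$ convergence of Theorem \ref{thm8} are required via the graphical criterion recorded just above; and converting the asymptotic statement $\Omega_t\to\overline{H^{+}}$ into a genuine initial condition for a parabolic comparison. I would handle the latter by initialising at a finite time $s$, bounding the enclosure defect by the exponentially small distance of $M_s$ to $\equator$, and passing to the limit $s\to-\infty$. The strong maximum principle and Hopf lemma at a first interior or boundary contact are the analytic heart of the reflection step and the place where convexity and the graphical setup are genuinely used.
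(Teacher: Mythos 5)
Your overall strategy --- anchor an Aleksandrov reflection at the backward limit equator of Theorem \ref{thm8}, use the $C^{\infty}$ convergence to make everything graphical, and propagate an enclosure relation forward by the maximum principle --- is the same as the paper's. The genuine problem is your primary route, which reflects in planes \emph{through the axis} ($\ip{\reflectionvector}{\vertvec}=0$) and tries to initialise the comparison by ``bounding the enclosure defect by the exponentially small distance of $M_s$ to $\equator$ and passing to the limit $s\to-\infty$''. This does not close. Writing $M_t$ as a graph over $\equator$ and letting $w$ denote the difference between the reflected graph of $\reflectionset{(M_t)}^{+}$ and the graph of $\reflectionset{(M_t)}^{-}$ (a function on $\equator^{-}$ vanishing on $\equator\intersect\reflectionplane$), $w$ satisfies a linear parabolic equation whose zeroth-order coefficient tends to $n$ as $t\to-\infty$ (it is $|A|^2+n\to n$, the same constant that governs the decay $|A|\leq c_0\exp(nt)$ in Lemma \ref{cor: backward limit of principal curvatures}). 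The maximum principle therefore only yields $\min w(t)\geq e^{c(t-s)}\min w(s)$ with $c\approx n$, and combined with $\min w(s)\geq -Ce^{ns}$ this gives $\min w(t)\geq -Ce^{nt}$ uniformly in $s$: the Gronwall factor exactly eats the exponential smallness of the defect, and letting $s\to-\infty$ proves nothing. (The Dirichlet condition on $\equator\intersect\reflectionplane$ does not rescue you: the first Dirichlet eigenvalue of a hemisphere of $\equator\simeq\sphere^{n}$ is again $n$, so the problem is exactly borderline.) This is precisely why the reflection method on the sphere works with \emph{tilted} planes.

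Your parenthetical ``alternative'' --- keep $\delta>0$, use $\height(\reflectionmap(x))=2\sin\delta\,\ip{x}{\reflectionvector}>0$ on $\equator^{+}$ to get strict enclosure for $t\ll 0$, then let $\delta\downarrow 0$ --- is the paper's actual argument, but as stated it still has a hole: the height gain $2\sin\delta\,\ip{x}{\reflectionvector}$ degenerates as $x$ approaches $\equator\intersect\reflectionplane$, so the strict separation at $t=-\infty$ is not uniform and fails on a neighbourhood of the fixed-point set. The paper splits $\equator$ into the region away from a strip around $\equator\intersect\reflectionplane$, where the height computation does give a definite gap, and the strip itself, where one needs the Backwards Approximate Symmetry Lemma \cite[Lemma 5.1]{Br-Lou} (a $C^{2}$ estimate along the great circles joining $\radialprojection(x)$ to $\reflectionmap(\radialprojection(x))$, whose angle with $\reflectionplane$ is shown to be independent of $x$) to obtain the inequality there; the passage from the family of inequalities for all $\delta\in(0,\delta_0)$ to the conclusion that $M_t$ is a geodesic sphere is then \cite[Proposition 5.3]{Br-Lou}, essentially the $\delta\downarrow 0$ argument you sketch. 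So: promote your alternative to the main argument and supply the strip estimate near $\equator\intersect\reflectionplane$; the $\delta=0$ initialisation as you propose it cannot be made to work by a crude defect bound.
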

\begin{proof}
Let \(E = M_{-\infty}=\lim_{t\to-\infty} M_t\) be the limiting equator. Since \(M_t\) smoothly converges to \(E\) as \(t\to-\infty\), we may write \(M_t\) as the graph of a smooth positive function over \(E\) in the geodesic polar coordinates, \(M_t = \{(f_t(\sigma), \sigma) \in (0,\pi) \times E\)\}. We have \(f_t \to \pi/2\) smoothly and uniformly in \(C^{k}\), for any $k$, as \(t \to -\infty\). Moreover, for \(\delta \in (0,\pi/4)\), \(\reflectionmap(E) = \{(g(\sigma), \sigma)\)\} is a graph over \(E\). Note that \(\reflectionmap(E)\) is not a graph when \(\delta = \pi/4\) (it is an equator perpendicular to \(E\)), so let us fix a \(\delta_0 \in (0,\pi/4)\) to give us a little room. Therefore, for $S$ sufficiently small and independent of \(\delta \in (0, \delta_0)\),  both \(\reflectionset{(M_t)}^-\) and \(\reflectionmap(\reflectionset{(M_t)}^+)\) are non-empty and are graphs for all times in \(t\in(-\infty, S)\).

We will show that \(\reflectionmap(\reflectionset{(M_t)}^+) > \reflectionset{(M_t)}^-\) away from a strip. As noted above, \(\height(\reflectionmap(x)) > 0\) on \(\reflectionset{E}^+\) and \(\height(x) = \cos(\radialdistance(x))\). Thus, continuity implies that for any \(\varepsilon > 0\) there exists an \(\eta>0\) such that \(\radialdistance(\reflectionmap(x)) < \pi/2 - \varepsilon\) provided \(x \in E_{\eta} := \{x \in E: d(x, E \intersect \reflectionplane) > \eta\}\). Since \(M_t \to_{C^{\infty}} E\), we can choose \(T_{\delta} < 0\), such that \(d(M_t, E) < \varepsilon/2\) for all \(t < T_{\delta}\); that is, \(\radialdistance (x) > \pi/2 - \varepsilon/2\) for all \(x \in M_t\). Now for \(x \in \reflectionset{(M_t)}^+ \intersect \radialprojection^{-1} (E_{\eta})\), since \(\reflectionmap\) is an isometry, we have \(d(\reflectionmap(x), \reflectionmap(\radialprojection(x))) < \varepsilon/2\); therefore, \(\radialdistance(\reflectionmap(x)) < \pi/2 - \varepsilon/2\). Consequently, away from the strip \(E\backslash E_{\eta}\), we have \(\radialdistance(\reflectionmap(\reflectionset{(M_t)}^+)) < \pi/2 - \varepsilon/2\) and \(\radialdistance(\reflectionset{(M_t)}^-) > \pi/2 - \varepsilon/2\). That is,  \[\reflectionmap(\reflectionset{(M_t)}^+) > \reflectionset{(M_t)}^-\quad \mbox{away from the strip.}\]

By \cite[Proposition 6.4]{BIS2}, we also have \(\reflectionmap(\reflectionset{(M_t)}^+) \geq \reflectionset{(M_t)}^-\) on the strip \(E\backslash E_{\eta}\). Thus we find that
\begin{equation}
\label{eq:backwards_approximate_symmetry}
\reflectionmap(\reflectionset{(M_t)}^+) \geq \reflectionset{(M_t)}^-\quad\mbox{for all}~ t \in (-\infty, T_{\delta}),
\end{equation}
where \(T_{\delta}\) depends only on \(\delta\).

We define \(T_{\delta}\) so that \((-\infty, T_{\delta})\) is the largest interval on which the relation \eqref{eq:backwards_approximate_symmetry} holds and \(T := \inf_{\delta \in (0,\delta_0)} T_{\delta}\). We want to show that \(T > -\infty\), and hence that the relation \eqref{eq:backwards_approximate_symmetry} holds on the non-empty interval \((-\infty, T)\). To show $T>-\infty$, we apply the maximum principle (see \cite[Lemma 6.5]{BIS2}): Recall that both \(\reflectionset{(M_t)}^-\) and \(\reflectionmap(\reflectionset{(M_t)}^+)\) are non-empty and are graphs for all \(t\in(-\infty, S)\) with \(S>-\infty\) independent of \(\delta\). Since the relation \eqref{eq:backwards_approximate_symmetry} is true on \((-\infty, \min\{T_{\delta},S\})\), the maximum principle applied in the time interval $[\min\{T_{\delta},S\}/2,S)$ ensures that
\begin{equation*}
\label{eq:longtime_approximate_symmetry}
\reflectionmap(\reflectionset{(M_t)}^+) \geq \reflectionset{(M_t)}^-
\end{equation*}
for all \(t \in (-\infty, S)\) and any \(\delta \in (0,\delta_0)\). Hence $T\ge S>-\infty.$

To finish the proof, we use \cite[Proposition 5.3]{Br-Lou} to conclude that \(M_t\) is a geodesic sphere for all \(t \in (-\infty, T)\) and thus for all negative times by the uniqueness of solutions.
\end{proof}
\bibliographystyle{amsplain}

\end{document}